\newtheorem{theorem}{Theorem}[section]
\newtheorem{definition}{Definition}[section]
\theoremstyle{remark}
\newtheorem*{remark}{Remark}
\theoremstyle{example}
\newtheorem{example}{Example}
\theoremstyle{lemma}
\newtheorem{lemma}[theorem]{Lemma}
\theoremstyle{lemma}
\newtheorem{Corollary}[theorem]{Corollary}
\title{Long-Time Asymptotics of \\ the Sliced-Wasserstein Flow}
\author{Giacomo~Cozzi\thanks{\scriptsize Dipartimento di Matematica ``Tullio Levi-Civita'', Università degli Studi di Padova,  \\Via Trieste, 63, 35131 Padova PD, Italy. {\tt cozzi@math.unipd.it}},  Filippo~Santambrogio\thanks{\scriptsize Universite Claude Bernard Lyon 1, ICJ UMR5208, CNRS, Ecole Centrale de Lyon, INSA Lyon, Universit\'e Jean Monnet,\\ 43 boulevard du 11 novembre 1918,
69622 Villeurbanne, France. {\tt santambrogio@math.univ-lyon1.fr}}}
\begin{document}
\maketitle

\begin{abstract}
The sliced-Wasserstein flow is an evolution equation where a probability density evolves in time, advected by a velocity field computed as the average among directions in the unit sphere of the optimal transport displacements from its 1D projections to the projections of a fixed target measure. This flow happens to be the gradient flow in the usual Wasserstein space of the squared sliced-Wasserstein distance to the target. We consider the question whether in long-time the flow converges to the target (providing a positive result when the target is Gaussian) and the question of the long-time limit of the flow map obtained by following the trajectories of each particle. We prove that this limit is in general not the optimal transport map from the starting measure to the target. Both questions come from the folklore about sliced-Wasserstein and had never been properly treated.
\end{abstract}




\section{Introduction}
\label{S:0}
Solving problems related to optimal transport, such as computing optimal transport maps and Wasserstein distances, is in general very expensive from the numerical point of view, when we consider measures on $\mathbb{R}^d$ with $d\geq 2$.
The one-dimensional case, on the contrary, can be treated easily: explicit formulas for optimal transport maps, for example, are given by the cumulative distribution functions of the starting and target densities and their pseudo-inverses; also, the usually highly non linear Monge-Ampére equation, becomes an ODE in this case.  
Any kind of transport construction where the multi-dimensional computation of optimal transport maps is replaced by (many) one-dimensional computations are therefore of extreme interest. Among the first idea of this type we mention, for instance, the Knothe-Rosenblatt transport map (see \cite{Knothe57} or \cite[Section 2.3]{santambrogio2015optimal}).

A different construction was inspired by the so called Iterative Distribution Transfer (IDT) algorithm (first proposed in \cite{pitie2007automated}), which was based on a sequence of one-dimensional optimal matchings between projections of the measures along different axes, chosen randomly at each time step. In order to avoid the dependence of this approximation scheme on the choice of the axes, M. Bernot later proposed an homogenization of the IDT procedure. We will see later that this approach is quite related to the notion of sliced-Wasserstein distance, which has been the object of many works in the last decade, starting from \cite{rabin2012wasserstein}. Yet, not a lot can be found in the literature about this evolutionary procedure, which is now called sliced-Wasserstein flow and which we try to describe here, mainly based on \cite{Ber-per} and \cite[Section 2.5.2]{santambrogio2015optimal}.

Suppose that $\rho_0$ and $\nu$ are absolutely continuous probability measures in $\mathbb{R}^d$ (we will use the same notation to indicate both an absolutely continuous probability and its density with respect to the Lebesgue measure $\mathscr{L}^d$), both with finite second moment. 
For any unitary direction $\vartheta \in \mathbb{S}^{d-1}$ we can define $\rho_0^\vartheta=(\pi^\vartheta)_{\#}\rho_0$ and $\nu^\vartheta=(\pi^{\vartheta})_{\#}\nu$, where $\pi^{\vartheta} \colon \mathbb{R}^{d} \to \mathbb{R}$ is the canonical projection on the line $\vartheta \mathbb{R}$ and the symbol $\#$ indicates the push-forward of the measure. 
Since $\rho_0^\vartheta$ and $\nu^\vartheta$ are one-dimensional probability measures (absolutely continuous, hence atomless), computing the optimal transport map $T^{\vartheta}$ between them is not difficult. 
Then, for every point $x \in \text{spt}(\rho_0)$ and any direction $\vartheta$, we want to move the point $x$ in the direction $\vartheta$ of a displacement given by the displacement of the map $T^{\vartheta}$ at the point $x \cdot \vartheta$, and combine these displacements for all values of $\vartheta$. We then define a vector field $v$ via 
\begin{equation}\label{defivrhonu}
    v(x):=\fint_{\mathbb{S}^{d-1}} \left(T^\vartheta(x\cdot\vartheta)-x\cdot\vartheta\right)\vartheta d\mathscr{H}^{d-1}(\vartheta).
    \end{equation}
When needed, this vector field will be denoted by $v[\rho,\nu]$, as it depends on both the current measure $\rho$ and on the target one $\nu$, but when there is no ambiguity we will just use the notation $v$.

Fixing a small time step $\tau > 0 $, we can define $\rho^{\tau}_1 \coloneqq (id+\tau v)_{\#}\rho_0$ and, iterating the construction, we can obtain a sequence $\{\rho^{\tau}_n\}_{n \in \mathbb{N}}$. Letting $\tau \to 0^{+}$ we recover an (absolutely continuous with respect to the $2$--Wasserstein metric) curve of probabilities $(\rho_t)_{t \geq 0}$ solving the continuity equation
\begin{equation}
\label{CE}
    \partial_t \rho + \nabla \cdot (\rho v)=0,
\end{equation}
where $v=v[\rho,\nu]$ is obtained using the construction in \eqref{defivrhonu}.

The difference compared to the Iterative Distribution Transfer procedure is that in IDT we fix $\tau=1$ and we only use finitely many $\vartheta$, and more precisely we choose an orthonormal basis of $\mathbb R^d$. This continuous-in-time procedure is for sure smoother and more isotropic. 

It can be seen that the above equation is indeed a gradient flow for the Wasserstein distance $W_2$ (see \cite{ambrosio2005gradient,SantSurvey}) of the following functional
\[\rho \mapsto \frac{SW_2^2(\rho,\nu)}{2},\]
where $SW_2(\rho,\nu)$ is the \textit{2--sliced-Wasserstein distance} from $\rho$ to $\nu$, for which we refer to section \ref{S:2}.

From now on, we will call \textit{Sliced-Wasserstein Flow (SWF)} the curve $(\rho_t)_{t\geq 0}$ obtained above. If the distance $SW_2$ has received much attention in the recent applied literature (because of its better computability than the standard $W_2$ or because of its better statistical properties, see for instance \cite{rabin2012wasserstein,bonnotte2013unidimensional,bayraktar2021strong,Nadjahi2020StatisticalAT} as well as \cite[Section 5.5.4]{santambrogio2015optimal}), the flow itself has not. It was informally introduced by Bernot as a way to converge to the target measure, and few results are available on it: in \cite{bonnotte2013unidimensional} Bonnotte proved the existence of solutions to the SWF via JKO methods, and the same flow is also mentioned in \cite[Sections 2.5.2 and 8.4.2]{santambrogio2015optimal}, but uniqueness and questions related to the asymptotic behaviour of the flow still remain open. We cite \cite{SWFdiff} for a related flow, where diffusion is also added, and applications to generative flows. Yet, the presence of diffusion strongly changes the mathematical nature of these questions.

In this work we partially answer to two natural questions regarding the asymptotic behavior of the SWF:
\begin{itemize}
    \item \textbf{Question 1.} (Long-time asymptotics of the flow) Is it true that the limit $\rho_\infty=\lim_{t \to \infty} \rho_t$ (in any suitable weak sense) exists and that we have $\rho_\infty = \nu$?
    \item \textbf{Question 2.} (Optimality of the flow map) Fix an initial particle $x \in \mathbb{R}^d$ and consider the flow map describing the characteristics of the flow. This map is obtained by solving, for every inital point $x$, the following system
    \begin{equation}
    \label{ODE.}
        \begin{cases}
            \dot{y_x}=v(t,y_x),\\
            y_x(0)=x,
        \end{cases}
    \end{equation}
    where $v$ is the velocity field defined in \eqref{defivrhonu}. Assume that $y_x(t)$, solution of the system above, exists and is well defined for any $t\geq 0$ and for every $x \in \mathbb{R}^d$, we will denote by $Y_t$ the map $x\mapsto y_x(t)$. Moreover, assume that the limit $Y_{\infty}\coloneqq\lim_{t \to \infty} Y(t)$ is well defined and that $\lim_{t \to \infty} \rho_t=\nu$. Is it true that the map $Y_{\infty}$ is optimal in the sense of optimal transport between the initial datum $\rho_0$ and the target measure $\nu$?
\end{itemize}

A similar question concerning the optimality of the flow map was already studied in the case of another flow map, the one obtained for the Fokker Planck equation. Negative results were produced first in \cite{tanana2021comparison}, and then in \cite{lavenant2022flow}. The latter result also included the case where the target distribution is the standard Gaussian. Similarly to the problem studied in \cite{tanana2021comparison,lavenant2022flow}, the flow map of the sliced-Wasserstein flow has also been the object of numerical investigation: in the case of discrete measures \cite{pitie2007automated} suggests that the map $Y_\infty$ obtained as in Conjecture $2$ is a good approximation of the optimal one, but does not coincide with it in general. A similar result was also conjectured by Bernot \cite{Ber-per}.

This work is structured as follows: in the next section we recall some basic facts about optimal transport and Wasserstein spaces, then we present the definition and first properties of the sliced-Wasserstein distance and of the sliced-Wasserstein flow. In section \ref{S:3} we prove estimates for the long-time behavior of the flow, and we discuss Conjecture 1, furnishing examples that show the flow may or may not converge, depending on the starting and target measures chosen: in particular, we prove convergence of the flow provided the target measure is the Gaussian distribution. This positive result, together with the nature of the negative ones suggest that one could have convergence to the target as soon as the initial measure is of finite entropy, but this general result is unfortunately unproven. Finally, section \ref{S:4} contains the counterexample which shows that Conjecture 2 is false.

\section{Notations and some results in optimal transport}
\label{S:1}
For a function $u : \mathbb{R}^d \mapsto \mathbb{R}$, we denote by $\nabla u$ its
gradient and $D^2 u$ its Hessian matrix. Moreover, for a map $T : \mathbb{R}^d \mapsto \mathbb{R}^d$, its Jacobian matrix is denoted by $DT$. 
Given any open subset $\Omega \subset \mathbb{R}^d$, we denote with $\mathscr{P}(\Omega)$ the set of probability measures with support in $\Omega$. We denote with $\mathscr{P}_p(\Omega)$ the set of probability measures on $\Omega$ with finite moment of order $p$ (in case $\Omega$ is unbounded), so that we have $\mathscr{P}_p(\Omega)=\{\mu\in \mathscr{P}(\Omega)\,:\, \int|x|^pd\mu<+\infty\}$.

Given $\mu, \nu \in \mathscr{P}(\Omega)$, we say that $T$ is a transport map from $\mu$ to $\nu$, and that $\nu$ is the pushforward of $\mu$ via $T$ (and we write $\nu=T_{\#}\mu$) if for every Borel set $A \subset \Omega$ we have
\[\nu(A)=\mu(T^{-1}(A))\]
or, equivalently, if for any Borel map $\phi \colon \Omega \to \mathbb{R}$ it holds
\[\int_{\Omega} \phi(x) d\nu(x) =\int_{\Omega}\phi(T(x))d\mu(x).\]
\begin{definition}
    Let $\mu, \nu \in \mathscr{P}\left(\Omega\right)$ and a continuous cost function $c:\Omega\times\Omega\to\mathbb R$. 
    The map $T : \mathbb{R}^d \to \mathbb{R}^d$ is said to be an optimal transport map for the cost $c$ if it solves the problem
    \begin{equation}
        \label{Monge_pb}
        \min_{T} \left\{\int_{\Omega}c(x,T(x))d\mu(x), \hspace{2mm} T \hspace{2mm} \text{Borel map such that} \hspace{2mm} T_{\#}\mu=\nu \right\}.
    \end{equation}
    The map $\varphi: \mathbb{R}^d \to \mathbb{R}$ is said to be a Kantorovich potential if it solves the problem
    \begin{equation}
    \label{Potential_pb}
        \max_{\varphi}\left\{\int_{\Omega}\varphi(x)d\mu(x)+\int_{\Omega}\varphi^c(x)d\nu(x)\right\},
    \end{equation}
    where $\varphi^c(x) \coloneqq \inf_{y \in \mathbb{R}^d} \left\{c(x,y)-\varphi(y)\right\}$.
\end{definition}
\begin{remark} Problems \eqref{Monge_pb} and \eqref{Potential_pb} do not always admit a solution. However, when they exist, the two quantities defined by \eqref{Monge_pb} with \say{$\min$} replaced by \say{$\inf$}, and \eqref{Potential_pb} with \say{$\max$} replaced by \say{$\sup$} coincide. 

We are in particular interested in the case $\Omega=\mathbb R^d$ and $c(x,y)=\frac 12|x-y|^2$. In this case we have a relation between the optimal $T$ and the optimal $\varphi$ (which is a Lipschitz function, and hence diffentiable a.e.) which is
\begin{equation}
\label{KantVsTransp}
T(x)=x-\nabla \varphi(x).
\end{equation}
In particular, we have $T=\nabla u$ where $u(x)=\frac12|x|^2-\varphi(x)$, and one can check that $u$ is convex. This is summarized in the following celebrated theorem.
\end{remark}
\begin{theorem}[Brenier's theorem, see \cite{brenier1987decomposition}]
\label{Brenier's_thm}
    Let $\mu$, $\nu$ be two absolutely continuous probability measures in $\mathscr{P}_2(\mathbb R^d)$. A map $T \colon \mathbb{R}^d \to \mathbb{R}^d$ such that $T_{\#}\mu=\nu$ is optimal for the optimal transport problem \eqref{Monge_pb} with the cost $c(x,y)=\frac 12|x-y|^2$ if and only if there exists a convex function $u \colon \mathbb{R}^d \to \mathbb{R}$ such that $T=\nabla u$ a.e. This is equivalent to $DT$ being a symmetric positive semi-definite matrix valued measure on $\mathbb{R}^d$. Moreover if it is the case, then $u$ must satisfy the Monge-Ampére equation
    \begin{equation*}
        \label{Monge_Ampére}
        \det D^2 u(x)=\frac{\nu(x)}{\mu(T(x))}
    \end{equation*}
    everywhere on $\mathbb{R}^d$. Finally, optimal transport maps are stable by \say{inversion}: $T \colon \mathbb{R}^d \to \mathbb{R}^d$ is the optimal transport map from $\mu$ to $\nu$ if and only if $T^{-1}$ is the optimal transport map from $\nu$ to $\mu$.
\end{theorem}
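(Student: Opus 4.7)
The plan is to follow the classical approach via Kantorovich duality. First I would invoke Kantorovich duality for the cost $c(x,y) = \tfrac{1}{2}|x-y|^2$, which gives existence of a maximizer $\varphi$ in \eqref{Potential_pb} and equality of the primal (minimum over transport plans) and dual values. The key observation is that for this quadratic cost the maximizer can be chosen $c$-concave, and $\varphi$ is $c$-concave for quadratic cost iff $u(x) := \tfrac{1}{2}|x|^2 - \varphi(x)$ is convex (with $u^*$, the Legendre transform, playing the role of the conjugate potential, up to the squared-norm correction). Since $u$ is convex on $\mathbb{R}^d$, it is locally Lipschitz and differentiable Lebesgue-a.e., hence $\mu$-a.e. by absolute continuity of $\mu$.

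Second, at every point $x$ where $u$ is differentiable, the first-order condition inside the infimum defining $\varphi^c(y)$ at $y=T(x)$ forces $y = x - \nabla\varphi(x) = \nabla u(x)$. By the standard complementary slackness between primal and dual optimizers, any optimal transport plan must be concentrated on the graph of the map $T = \nabla u$, which both proves existence and identifies the form of the optimal map. For the converse direction I would argue via cyclical monotonicity: the subdifferential of a convex function is a cyclically monotone set, so if $T = \nabla u$ with $u$ convex and $T_\#\mu=\nu$, then $(\mathrm{id},T)_\#\mu$ is supported on a cyclically monotone set, which by Rockafellar's theorem characterizes optimality for the quadratic cost.

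Third, for the equivalence with $DT$ being a symmetric positive semi-definite matrix-valued measure: if $T=\nabla u$ with $u$ convex then $DT = D^2 u$ is symmetric and PSD in the distributional sense precisely because $u$ is convex; conversely, symmetry of $DT$ (understood distributionally) combined with Poincar\'e's lemma produces a potential $u$ with $\nabla u = T$, and positive semi-definiteness of $D^2 u$ upgrades $u$ to a convex function. The inversion stability is then immediate from the symmetry of the quadratic cost together with the Legendre duality $(\nabla u)^{-1} = \nabla u^*$, which is well-defined $\nu$-a.e.\ by Alexandrov's theorem and the absolute continuity assumptions.

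Finally, the Monge--Amp\`ere equation is the pointwise expression of the mass balance $T_\#\mu=\nu$. Formally, changing variables $y=\nabla u(x)$ in $\int\phi\, d\nu = \int \phi\circ T\, d\mu$ gives $\nu(T(x))\det D^2 u(x) = \mu(x)$ a.e. I expect the main technical obstacle to lie precisely here: $D^2 u$ exists a priori only as a matrix-valued Radon measure, and the change-of-variables formula must be justified using Alexandrov's almost-everywhere twice-differentiability of convex functions, which gives sense to the a.e.\ pointwise equation only on the set of Alexandrov points and requires controlling the singular part of $D^2 u$. The remaining steps (duality, convexity of $u$, cyclical monotonicity, Legendre inversion) are standard once this analytic point is handled.
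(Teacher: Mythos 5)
The paper does not prove this theorem at all: it is stated as classical background and attributed to Brenier via the citation, so there is no in-paper argument to compare against. Your outline is the standard proof (Kantorovich duality, $c$-concavity of the potential expressed as convexity of $u(x)=\tfrac12|x|^2-\varphi(x)$, identification of the optimal map as $\nabla u$ through complementary slackness, converse via cyclical monotonicity and Rockafellar, inversion via $(\nabla u)^{-1}=\nabla u^*$ $\nu$-a.e.), and it is essentially correct in structure; you also correctly isolate the one genuinely delicate point, namely that the Monge--Amp\`ere equation only makes pointwise sense through Alexandrov's a.e.\ second differentiability (McCann's argument), with the absolutely continuous part of $D^2u$, so it holds a.e.\ rather than ``everywhere'' as the statement loosely claims (strengthening this to everywhere requires Caffarelli-type regularity hypotheses on the densities and their supports). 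Two small remarks: first, the change of variables you perform yields $\mu(x)=\nu(T(x))\det D^2u(x)$, i.e.\ $\det D^2u(x)=\mu(x)/\nu(T(x))$, which is the standard formula and is the one the paper itself uses later (in Section~5, $u''_{\vartheta}(z)=\rho^{\vartheta}(z)/\nu^{\vartheta}(T^{\vartheta}(z))$); the displayed equation in the theorem, with $\nu$ and $\mu$ interchanged, appears to be a typo rather than an error on your side. Second, in the direct implication the cleaner route is to differentiate the equality $\varphi(x)+\varphi^c(y)=c(x,y)$ on the support of an optimal plan at points where $\varphi$ is differentiable, rather than invoking a first-order condition ``inside the infimum'' defining $\varphi^c$, but this is a presentational point, not a gap.
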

For the one-dimensional problem, we are able to characterize optimal transport maps:
\begin{theorem}[Brenier's theorem in $d=1$, see \cite{santambrogio2015optimal}, Chapter 2]
    Let $\mu$, $\nu$ be two probability densities with finite second moments. If $\mu$ is atomless, then there exist a unique non-decreasing transport map from $\mu$ to $\nu$, which is optimal and given by
    \[T(x)=F_\nu^{[-1]}\left(F_{\mu}(x)\right),\]
    where $F_{\mu}(x)=\mu((-\infty,x])$ and $F^{[-1]}(y)=\inf\{t\in \mathbb{R} \colon F(t)\geq y\}$.
\end{theorem}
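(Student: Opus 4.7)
My plan is to establish the three parts of the statement separately: well-definedness together with the push-forward identity, uniqueness among non-decreasing maps, and optimality for the quadratic cost.

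First I would define $T(x) \coloneqq F_\nu^{[-1]}(F_\mu(x))$ and note that it is non-decreasing as a composition of two non-decreasing functions. To verify $T_{\#}\mu = \nu$, I would rely on two elementary facts about pseudo-inverses of CDFs: the Galois-type identity stating that $F_\nu^{[-1]}(u) \leq t$ holds if and only if $u \leq F_\nu(t)$, valid for all $t \in \mathbb{R}$ and $u \in (0,1)$, and the consequence that when $\mu$ is atomless the function $F_\mu \colon \mathbb{R} \to [0,1]$ is continuous, so that $(F_\mu)_{\#}\mu$ coincides with the uniform measure on $[0,1]$. Computing the CDF of $T_{\#}\mu$ at an arbitrary $t$ by combining these two ingredients then immediately yields $(T_{\#}\mu)((-\infty,t]) = F_\nu(t)$, i.e.~$T_{\#}\mu = \nu$.

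For uniqueness, I would take any non-decreasing $S$ satisfying $S_{\#}\mu = \nu$ and argue that $S(x) = T(x)$ at $\mu$-a.e. point. By monotonicity of $S$, the preimages of the half-lines strictly below and weakly below $S(x)$ sandwich $(-\infty, x)$ and $(-\infty, x]$, which after pushing forward yields $F_\nu(S(x)^-) \leq F_\mu(x) \leq F_\nu(S(x))$. Since $F_\nu$ has at most countably many jumps and $\mu$ is atomless, this sandwich pins down $S(x)$ at $\mu$-a.e. point, and the forced value is precisely $F_\nu^{[-1]}(F_\mu(x))$.

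For optimality with $c(x,y) = \tfrac{1}{2} |x-y|^2$, the key input is that the support of any optimal plan $\gamma$ between $\mu$ and $\nu$ is $c$-cyclically monotone, and in particular satisfies the two-point inequality $c(x_1,y_1) + c(x_2,y_2) \leq c(x_1,y_2) + c(x_2,y_1)$ for any two points in its support. Expanding the squares, this reduces to $(x_1 - x_2)(y_1 - y_2) \geq 0$, so $\text{spt}(\gamma)$ is a non-decreasing subset of $\mathbb{R}^2$; atomlessness of $\mu$ then forces $\gamma$ to be concentrated on the graph of a non-decreasing map, and by the previous step this map coincides with $T$, so $T$ is optimal. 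The only delicate point in the whole argument is the bookkeeping around the flat parts of $F_\nu$, equivalently the jumps of $F_\nu^{[-1]}$: the resulting ambiguities are $\mu$-null because $\mu$ is atomless, but one must distinguish left- and right-continuous versions of $F_\nu$ when writing the sandwich inequalities for uniqueness.
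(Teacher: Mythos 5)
The paper does not prove this statement: it is quoted as a classical result with a pointer to Chapter 2 of the cited reference, and your argument is exactly the standard proof found there (pseudo-inverse construction verified through the Galois-type equivalence and the fact that $(F_\mu)_{\#}\mu$ is uniform when $\mu$ is atomless, $\mu$-a.e.\ uniqueness of the non-decreasing map via the sandwich $F_\nu(S(x)^-)\leq F_\mu(x)\leq F_\nu(S(x))$, and optimality via $c$-cyclical monotonicity of an optimal plan, which for the quadratic cost forces a monotone support and hence, by uniqueness, the plan induced by $T$). Your proof is correct; the only wording slip is in the uniqueness step, where the countable exceptional set comes from the flat parts of $F_\nu$ (equivalently the jumps of $F_\nu^{[-1]}$) rather than from the jumps of $F_\nu$ — a point your closing sentence already identifies correctly.
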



\section{Sliced-Wasserstein distance and sliced-Wasserstein flow}
\label{S:2}
\subsection{Wasserstein spaces}
Let $\Omega \subset \mathbb{R}^d$. Thanks to the transport value associated with the costs of the form $c(x,y)=|x-y|^p$ for $1 \leq p < \infty$, we can define a distance called \textit{p--Wasserstein distance} over $\mathscr{P}_p(\Omega)$, the space of probability measures with support contained in $\Omega$ and finite $p$-moment.

Wasserstein distances play a key role in many fields of applications, and seem to be a natural way to describe distances between equal amounts of mass distributed on the same space. 
\begin{definition}[Wasserstein distance]
    Let $1 \leq p < \infty$, $\Omega \subset \mathbb{R}^d$. Given two absolutely ontinuous measures $\mu, \nu \in \mathscr{P}_{p}(\Omega)$ we define 
    \begin{equation*}
        \label{Wasserstein_Distance}
            W_{p}(\mu, \nu)= \inf \left\{\int_{\Omega} |x-T(x)|^p d\mu(x) \hspace{1mm}\colon \hspace{2mm} T \hspace{2mm} \text{Borel map such that} \hspace{2mm} T_{\#}\mu=\nu\right\}^{\frac{1}{p}}.
    \end{equation*}
\end{definition}
The above definition could actually be extended to the whole set of measures in $\mathscr{P}_{p}(\Omega)$ (without the restriction to be absolutely continuous) but this requires to re-formulate the transport problem in terms of transport plans instead of transport maps. However, in this article this aspect will not be crucial and the only cases where non-absolutely continuous measures will be used concern some counter-examples where it is easy to understand the adaptations that need to be performed.

As a refence on the distance $W_p$ we refer the reader to \cite[Chapter 5]{santambrogio2015optimal} where it is proven, in particulat, that $W_p$ is indeed a distance over $\mathscr{P}_{p}(\Omega)$.  Moreover, the following theorem holds:

\begin{theorem}
    \label{Compactness_Wasserstein}
    Fix $p \geq 1$, $q> p$ and $c > 0$. Then the set \[ K \coloneqq \left\{\mu \in \mathscr{P}_{p}\left(\mathbb{R}^d\right) \colon \int_{\mathbb{R}^d}|x|^q d\mu(x) \leq c\right\}\] is sequentially compact in the metric space $\left( \mathscr{P}_p \left(\mathbb{R}^d\right), W_{p}\right)$.
\end{theorem}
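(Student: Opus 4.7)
The plan is to prove sequential compactness in two steps: first extract a narrowly convergent subsequence via Prokhorov's theorem using tightness, then upgrade narrow convergence to $W_p$-convergence using the strict inequality $q>p$ to obtain uniform integrability of $|x|^p$.

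For tightness of $K$, I would apply Markov's inequality: for any $\mu \in K$ and $R>0$,
\[
\mu(\{x\in\mathbb{R}^d : |x|>R\}) \leq \frac{1}{R^q}\int_{\mathbb{R}^d}|x|^q\, d\mu \leq \frac{c}{R^q}.
\]
Since the right-hand side is independent of $\mu$ and tends to $0$ as $R\to\infty$, the set $K$ is uniformly tight. Prokhorov's theorem then yields, for any sequence $\{\mu_n\}\subset K$, a subsequence $\{\mu_{n_k}\}$ converging narrowly to some probability measure $\mu_\infty\in\mathscr{P}(\mathbb{R}^d)$. To see $\mu_\infty\in K$, I apply narrow convergence to the bounded continuous truncations $|x|^q\wedge M$ and then let $M\to\infty$ using monotone convergence, which (via the Fatou-type lower semicontinuity of $\mu\mapsto\int|x|^q d\mu$) gives $\int|x|^q d\mu_\infty \leq \liminf_k \int|x|^q d\mu_{n_k}\leq c$.

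The heart of the argument is upgrading narrow convergence to $W_p$-convergence. I use the standard characterization (see \cite[Chapter 5]{santambrogio2015optimal}) that for probability measures in $\mathscr{P}_p(\mathbb{R}^d)$, $W_p$-convergence is equivalent to narrow convergence together with convergence of the $p$-th moments, i.e.\ $\int|x|^p d\mu_{n_k}\to\int|x|^p d\mu_\infty$. The bound $q>p$ enters precisely here: for any $R>0$,
\[
\int_{\{|x|>R\}}|x|^p\, d\mu_{n_k} \leq R^{p-q}\int_{\mathbb{R}^d}|x|^q\, d\mu_{n_k} \leq c\, R^{p-q},
\]
and the right-hand side is independent of $k$ and vanishes as $R\to\infty$. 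Hence the family $\{|x|^p\}$ is uniformly integrable against $\{\mu_{n_k}\}$. Combining uniform integrability with narrow convergence gives $\int|x|^p d\mu_{n_k}\to\int|x|^p d\mu_\infty$, hence $W_p(\mu_{n_k},\mu_\infty)\to 0$.

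The main conceptual point (rather than an obstacle) is the equivalence between $W_p$-convergence and narrow convergence plus $p$-th moment convergence, which is the standard tool taken from the optimal transport literature cited above; once this is invoked, the proof reduces to the two one-line Markov estimates above. Everything else is routine measure theory.
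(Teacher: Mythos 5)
Your proof is correct: tightness via the Markov bound, Prokhorov, lower semicontinuity of the $q$-moment to keep the limit in $K$, and the uniform integrability of $|x|^p$ coming from $q>p$ combined with the characterization of $W_p$-convergence as narrow convergence plus convergence of $p$-th moments. The paper states this theorem without proof, citing the standard optimal transport literature, and your argument is precisely the standard proof given there, so there is nothing to compare beyond noting the approaches coincide.
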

\subsection{Sliced-Wasserstein}
The following definition was introduced in \cite{rabin2012wasserstein}, and provides a notion of distance on $\mathscr{P}_2(\mathbb{R}^d)$, alternative to the usual $W_2$ distance, based on the behavior of the measures \say{direction by direction}.
\begin{definition}[Sliced-Wasserstein distance]
    Given $\mu,\nu \in \mathscr{P}_2(\mathbb{R}^d)$, we define
    \begin{equation*}
    \label{SW_dist}
            SW_2 \coloneqq \left(\fint_{\mathbb{S}^{d-1}}W_2^2((\pi^{\vartheta})_{\#}\mu,(\pi^{\vartheta})_{\#}\nu)d\mathscr{H}^{d-1}(\vartheta)\right)^{1/2},
    \end{equation*}
    where $\pi^\vartheta \colon \mathbb{R}^d \to \mathbb{R}$ is the projection on the axis directed according to the unit vector $\vartheta$, namely $\pi^{\vartheta}(x)=x\cdot \vartheta$, and $\mathscr{H}^{d-1}$ is the surface measure on $\mathbb{S}^{d-1}$.
\end{definition}
It is not difficult to prove that, for any $\Omega \subset \mathbb{R}^d$,
this is a distance on $\mathscr{P}_2(\Omega)$, 
that is always less than the usual 2-Wasserstein distance.
The two distances generate the same topology (see \cite{bayraktar2021strong}), with Hölder relation on compact sets (see \cite{bonnotte2013unidimensional},
Chapter 5). Many interesting properties, pointing out surprising difference in the behavior of $SW_2$ compared to $W_2$, have been studied in the recent paper \cite{ParSle}. We also mention the possibility to define a distance $SW_p$, via $SW_p :=\left(\fint_{\mathbb{S}^{d-1}}W_p^p((\pi^{\vartheta})_{\#}\mu,(\pi^{\vartheta})_{\#}\nu)d\mathscr{H}^{d-1}(\vartheta)\right)^{1/p}$, which is sometimes more convenient for sharp comparisons with the standard Wasserstein distances (see \cite{CarFigMerWan}).

To show that equation \eqref{CE} derived in the introduction, describing the one-dimensional approximation scheme for a transport map, can be seen as the gradient flow of the functional $\mathscr{F}\colon \rho \mapsto \frac12 SW_2^2(\rho,\nu)$ in the Wasserstein space $(\mathscr{P}(\mathbb{R}^d), W_2)$ we need to convince ourselves that $v_t = -\nabla \frac{\delta \mathscr{F}}{\delta \rho}(\rho_t)$, 
where $\frac{\delta \mathscr{F}}{\delta \rho}(\rho)$  is the first variation of $\mathscr{F}$, defined by
\[\frac{d}{d\varepsilon}\bigg|_{\varepsilon=0} \mathscr{F}(\rho + \varepsilon \chi)=\int \frac{\delta \mathscr{F}}{\delta \rho}(\rho) d\chi\]
for any perturbation $\chi=\mu-\rho$ with $\mu \in L^{\infty}(\mathbb{R}^d)\cap \mathscr{P}(\mathbb{R}^d)$ and compactly supported. Since the first variation of the functional $\rho \mapsto \frac12 W_2^2(\rho,\nu)$ is the Kantorovich potential for the transport problem from $\mu$ to $\nu$ (see \cite{santambrogio2015optimal}, proposition 7.17), we get
\begin{equation*}
    \frac{\delta \mathscr{F}}{\delta \rho}(\rho)(x)=\fint_{\mathbb{S}^{d-1}} \frac{1}{2} \frac{\delta W_2^2(\rho^\vartheta,\nu^\vartheta)}{\delta \rho}(x) d\mathscr{H}^{d-1}(\vartheta)=\fint_{\mathbb{S}^{d-1}} \varphi^{\vartheta}(x\cdot \vartheta)d\mathscr{H}^{d-1}(\vartheta),
\end{equation*}
where we denoted by $\varphi^{\vartheta}$ the Kantorovich 
potential between $\rho^\vartheta \coloneqq (\pi^\vartheta)_{\#}\rho$ and
$\nu^\vartheta\coloneqq (\pi^\vartheta)_{\#}\nu$.  
Using also \eqref{KantVsTransp}, this in turns leads to
\begin{equation*}
    -\nabla \frac{\delta \mathscr{F}}{\delta \rho}(\rho)=\fint_{\mathbb{S}^{d-1}} \left(T^\vartheta_{t}(x\cdot \vartheta)-x\cdot\vartheta\right)\vartheta d\mathscr{H}^{d-1}(\vartheta),
\end{equation*}
which is precisely the velocity field derived in \eqref{defivrhonu}.

\section{Long-time asymptotics of the SWF}
\label{S:3}

The question of the limit as $t\to\infty$ of the distribution $\rho_t$ is very natural but surprisingly difficult, even if the SWF was probably mainly introduced as a way to approach the target distribution $\nu$. Section 5.7 in \cite{bonnotte2013unidimensional} explains the difficulty of this question as it is not clear, except in the case of strictly positive densities, that critical points should coincide with the target. Note that \cite{LiMoos} proves convergence results for a discrete (in time) version of a similar procedure, but among its assumption there is a very restrictive one: the existence of a compact set in the space of measures where the evolution takes place and which contains no other critical point than $\nu$. 
\subsection{Bounds on the moments along the flow}
In this section we provide some uniform bounds to the $p$-moments of the sliced-Wasserstein flow. We start by computing the derivative with respect to time of the $p$--moment ($p\geq 2$) of the sliced-Wasserstein flow $\rho_t$ associated with a given target measure $\nu$. We distinguish the case $p=2$ and $p>2$

For $p=2$, using the equality 
\begin{equation*}
    x\cdot \vartheta\left(T^\vartheta(x\cdot\vartheta)-x\cdot\vartheta\right)=-\frac{|T^\vartheta(x\cdot \vartheta)-x\cdot \vartheta|^2}{2}-\frac{|x\cdot\vartheta|^2}{2}+\frac{|T^\vartheta(x\cdot\vartheta)|^2}{2},
\end{equation*}
we have
\begin{equation}
\label{due}
    \begin{split}
        \partial_t \int \frac{|x|^2}{2} \rho_t &=\partial_{t} \int \frac{|x|^{2}}{2}\rho_t = -\int \frac{|x|^{2}}{2}\nabla\cdot(v \rho_t) = \int x\cdot v \rho_t \\
        &= \fint \int x \cdot \vartheta \left(T^\vartheta(x\cdot \vartheta)-x\cdot \vartheta\right) \rho_t = \fint \int_{\mathbb{R}} \frac{|T^\vartheta_t(y)|^2}{2}\rho_t^\vartheta(y)\\
        &-\fint \int_{\mathbb{R}} \frac{r^2}{2}\rho_t^\vartheta(y)-\frac{1}{2}\fint \int_{\mathbb{R}} |T^\vartheta_t(y)-y|^2 \rho_t^\vartheta(y) \\
        &=\fint \int_{\mathbb{R}} \frac{|T^\vartheta_t(y)|^2}{2}\rho_t^\vartheta(y)-\fint \int_{\mathbb{R}} \frac{r^2}{2}\rho_t^\vartheta(y)-\frac{1}{2} SW_{2}^{2}(\rho_t,\nu) \\
        &=\fint \int_{\mathbb{R}} \frac{r^2}{2}\nu^\vartheta(r)-\fint \int_{\mathbb{R}} \frac{r^2}{2}\rho_t^\vartheta(y)-\frac{1}{2} SW_{2}^{2}(\rho_t,\nu)
    \end{split}
\end{equation}

For $p>2$, on the other hand, we have (defining $\mathcal{M}_p(\rho)\coloneqq \int|x|^p \rho$)
\begin{equation*}
\begin{split}
\label{pmomenti}
    &\partial_{t}\mathcal{M}_{p}(\rho_t)= \partial_{t} \int \frac{|x|^{p}}{p}\rho_t = -\int \frac{|x|^{p}}{p}\nabla\cdot(v \rho_t) = \int |x|^{p-2}x\cdot v \rho_t \\
    &= \int |x|^{p-2}x \cdot \left(\fint (T^\vartheta_t(x\cdot \vartheta)-x\cdot \vartheta)\vartheta \right) \rho_t =  \fint \int |x|^{p-2}x\cdot \vartheta (T^\vartheta_t(x\cdot \vartheta)-x\cdot \vartheta)\rho_t \\
    &\leq  - \fint \int |x|^{p-2}\frac{|x\cdot \vartheta|^2}{2}\rho_t + \fint \int |x|^{p-2}\frac{|T^\vartheta_t(x\cdot \vartheta)|^{2}}{2}\rho_t 
\end{split}
\end{equation*}
where we used the inequality
$$a\cdot (b-a)\leq \frac12|b|^2-\frac12|a|^2.$$
Thus
\begin{equation*}
    \begin{split}
    \partial_t \mathcal{M}_p(\rho_t) &\leq - \frac{1}{2}\int |x|^{p-2}\left(\fint |x\cdot \vartheta|^2\right)\rho_t+\frac{1}{2}\fint\int |x|^{p-2}|T^\vartheta_t(x \cdot  \vartheta)|^2 \rho_t \\
    &\leq-\int |x|^{p-2}\frac{|x|^2}{2d}\rho_t +\frac{1}{2}\fint \left[\left(\int |x|^{p}\rho_t\right)^{\frac{p-2}{p}}\left(\int |T^\vartheta_t(x\cdot \vartheta)|^{p}\rho_t\right)^{\frac{2}{p}}\right] 
    \end{split}
\end{equation*}
using the equality
\begin{equation*}
    \fint_{\mathbb{S}^{d-1}} |x\cdot \vartheta|^{2}d\mathscr{H}^{d-1}(\vartheta)= \frac{|x|^{2}}{d}.
\end{equation*}
This is a particular case of a more general family of equalities, 
\begin{equation*}
    \fint |x \cdot \vartheta|^{p} = c_{p,d}|x|^{p},
\end{equation*}
which are true for certain constants $c_{p,d}<1$ (and indeed we have $c_{2,d}=1/d$).
Hence, finally, using Jensen's inequality, we get
 \begin{equation}
 \label{stime_finali_pmomenti}
     \begin{split}
        \partial_t \mathcal{M}_p(\rho_t)&\leq  -\frac{1}{2d}\mathcal{M}_{p}(\rho_t)+\frac{1}{2}\mathcal{M}_{p}(\rho_t)^{1-\frac{2}{p}}\left(\int \fint |y\cdot  \vartheta|^{p}\nu\right)^{\frac{2}{p}}\\
        &\leq  -\frac{1}{2d}\mathcal{M}_{p}(\rho_t)+\frac{1}{2}\mathcal{M}_{p}(\rho_t)^{1-\frac{2}{p}}c_{p,d}^{\frac{2}{p}}\mathcal{M}_{p}(\nu)^{\frac{2}{p}}.
     \end{split}
 \end{equation}
We deduce 
\begin{equation}
\label{Estimate_2_Moments}
\mathcal{M}_{p}(\rho_t) \leq \max \left\{\mathcal{M}_{p}(\rho_{0}), d^{p/2}c_{p,d}\mathcal{M}_{p}(\nu)\right\},
\end{equation}
since \eqref{stime_finali_pmomenti} gives a differential inequality of the form 
\[x' \leq -\frac{1}{2d}x +x^{1-2/p}\frac{C^{2/p}}{2},\]
and the left hand side is negative for 
\[x \geq Cd^{p/2}.\]
By Theorem \ref{Compactness_Wasserstein} we deduce that, if the target and starting measure have finite $p$-moments, the flow describes a compact curve in $W_q$ for any $q < p$.
Moreover, denoting by 
\[R(\mu)\coloneqq \inf\left\{r>0 \colon \text{ supp}(\mu) \subset \{|x|\leq r\}\right\}=\lim_{p \to \infty} \mathcal{M}^{1/p}_{p}(\mu),\] 
we can see that we have
\begin{equation*}
\begin{split}
    R(\rho_t)=\lim_{p \to \infty} \mathcal{M}_{p}(\rho_t)^{1/p}&\leq \max \left\{\lim_{p \to \infty} \mathcal{M}_{p}(\rho_{0})^{1/p}, d^{1/2} \lim_{p \to \infty} c_{p,d}^{1/p}\mathcal{M}_{p}(\nu)^{1/p}\right\}\\
    &=\max \left\{R(\rho_0),\sqrt{d}R(\nu)\right\},
\end{split}
\end{equation*}
since 
\[\lim_{p \to \infty} c_{d,p}= \lim_{p \to \infty} \lVert\vartheta\rVert_{L^{p}\left(\mathbb{S}^{d-1}\right)}=\lVert \vartheta \rVert_{L^{\infty}\left(\mathbb{S}^{d-1}\right)}=1.\]
Obviously this estimate is useless when the initial and target measures are not compactly supported. This estimate is not sharp, and one can show that the constant $\sqrt{d}$ can be improved by finer arguments on the direction of the velocity field $v$ on large balls far from the support of the target measure.

\subsection{Gaussian-target case}
We prove here that when the target measure $\nu$ is the standard Gaussian distribution, then $\lim_{t \to \infty}\rho_t$ in the Wasserstein topology exists and coincides with the Gaussian distribution $\nu$ itself. The argument we present can be adapted to isotropic Gaussian measures of arbitrary variance, but unfortunately does not generalize to other Gaussian distribution (whose covariance matrix is not a scalar multiple of the identity). 

Let the entropy function $\mathcal{E} \colon \mathscr{P}(\mathbb{R}) \to \mathbb{R}$ be defined by
    \begin{equation*}
    \mathcal{E}(\rho)=
    \begin{cases}
        \int_{\mathbb{R}^d}\rho \log \rho &\text{ if } \rho \ll \mathscr{L}^d,\\
        +\infty & \text{otherwise}.
    \end{cases}
\end{equation*}
\begin{theorem}
\label{Gaussian theorem}
    Let $\nu$ be the standard Gaussian distribution on $\mathbb{R}^d$. If $\rho_0$ has finite entropy and finite second moment, then the flow $(\rho_t)_{t\geq 0}$ is well posed and we have
    \[SW_2^2(\rho_t,\nu) \leq \frac{C}{t}\]
    where $C=C(\rho_0)=2(\mathcal E(\rho_0)+\mathcal M_2(\rho_0))$.
\end{theorem}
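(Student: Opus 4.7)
The plan is to identify a Lyapunov functional $\Psi$ whose time derivative along the flow is bounded above by $-\tfrac12 SW_2^2(\rho_t,\nu)$, and then exploit the fact that $SW_2^2(\rho_t,\nu)$ is itself non-increasing along its own gradient flow (as $\partial_t\mathscr{F}(\rho_t)=-\int|v_t|^2\rho_t\leq 0$). Granting the dissipation inequality, integration from $0$ to $t$ yields $\int_0^t SW_2^2(\rho_s,\nu)\,ds\leq 2(\Psi(\rho_0)-\Psi_{\min})$, and the monotonicity of $SW_2^2$ converts this into the pointwise bound $SW_2^2(\rho_t,\nu)\leq 2(\Psi(\rho_0)-\Psi_{\min})/t$. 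When $\nu=\gamma$ is the standard Gaussian the natural candidate is $\Psi(\rho):=\mathcal{E}(\rho)+\tfrac12\mathcal{M}_2(\rho)$, which coincides with the relative entropy $H(\rho\mid\gamma)$ up to an additive constant and attains its minimum at $\gamma$, so $\Psi(\rho_0)-\Psi_{\min}$ is controlled by the sum $\mathcal{E}(\rho_0)+\mathcal{M}_2(\rho_0)$ as in the statement.

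To compute $\dot\Psi(\rho_t)$ I would use, for the moment part, equation~\eqref{due} which with $\mathcal{M}_2(\gamma)=d$ and $\fint_{\mathbb{S}^{d-1}}(x\cdot\vartheta)^2\,d\mathscr{H}^{d-1}=|x|^2/d$ gives $\tfrac12\partial_t\mathcal{M}_2(\rho_t)=\tfrac12-\tfrac{\mathcal{M}_2(\rho_t)}{2d}-\tfrac12 SW_2^2(\rho_t,\nu)$; and for the entropy the formal chain rule $\partial_t\mathcal{E}(\rho_t)=\int\rho_t\,\Delta\Phi\,dx$, where $\Phi(x)=\fint\varphi^\vartheta(x\cdot\vartheta)\,d\mathscr{H}^{d-1}(\vartheta)$ is the first variation of $\mathscr{F}$ from Section~\ref{S:2}. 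Combining with the one-dimensional identity $(\varphi^\vartheta)''=1-(T^\vartheta)'$ gives $\partial_t\mathcal{E}(\rho_t)=1-\fint A^\vartheta\,d\mathscr{H}^{d-1}(\vartheta)$ with $A^\vartheta:=\int_\mathbb{R}(T^\vartheta)'(y)\rho_t^\vartheta(y)\,dy$. Adding the two contributions and rewriting $\mathcal{M}_2(\rho_t)/d=\fint M_2(\rho_t^\vartheta)\,d\mathscr{H}^{d-1}(\vartheta)$, the target inequality reduces to the direction-wise one-dimensional estimate $A^\vartheta+\tfrac12 M_2(\rho_t^\vartheta)\geq \tfrac32$. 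This is where isotropy of $\gamma$ enters essentially: setting $S:=(T^\vartheta)^{-1}$ and $S(z)=z+g(z)$, one rewrites the left-hand side as $\int[(1+g')^{-1}+\tfrac12(z+g)^2]\nu^\vartheta(z)\,dz$, uses the algebraic identity $(1+g')^{-1}=1-g'+(g')^2/(1+g')$, and invokes the Gaussian integration by parts $\int g'\nu^\vartheta\,dz=\int zg\,\nu^\vartheta\,dz$ (coming from $(\nu^\vartheta)'(z)=-z\nu^\vartheta(z)$, precisely the feature that fails for non-isotropic targets) to cancel the cross term from expanding $(z+g)^2$; what remains is manifestly non-negative:
\[
A^\vartheta+\tfrac12 M_2(\rho_t^\vartheta)=\tfrac32+\int\frac{(g'(z))^2}{1+g'(z)}\nu^\vartheta(z)\,dz+\tfrac12\int g(z)^2\nu^\vartheta(z)\,dz\geq\tfrac32.
\]

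The main obstacle is to make the formal computation of $\partial_t\mathcal{E}$ rigorous: the identity $\partial_t\mathcal{E}=\int\rho_t\,\Delta\Phi\,dx$ rests on an integration by parts for which one needs some regularity of the one-dimensional potentials $\varphi^\vartheta$ and some control of $\rho_t$ away from vacuum regions, neither of which follows from the sole assumptions of finite entropy and second moment. I would therefore work at the level of the JKO scheme underpinning Bonnotte's existence theorem and establish the discrete analogue $\Psi(\rho^\tau_{n+1})-\Psi(\rho^\tau_n)\leq-\tau\cdot\tfrac12 SW_2^2(\rho^\tau_{n+1},\nu)$ from the optimality conditions of each one-step minimization (natural since $\Psi$ is $1$-convex along Wasserstein geodesics), and then pass to the limit $\tau\to 0$ using lower semicontinuity of $\Psi$ and the moment bound~\eqref{Estimate_2_Moments}. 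Well-posedness of the flow starting from finite-entropy data would be a byproduct of this construction.
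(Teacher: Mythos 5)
Your proposal is correct at the same (formal) level of rigor as the paper, and its global architecture is exactly the paper's: take $\Psi(\rho)=\mathcal E(\rho)+\tfrac12\mathcal M_2(\rho)$ as Lyapunov functional, prove $\partial_t\Psi(\rho_t)\leq-\tfrac12 SW_2^2(\rho_t,\nu)$ using \eqref{due} for the moment part, integrate in time, and use the monotonicity of $SW_2^2$ along its own gradient flow to turn the time-integral bound into the pointwise $C/t$ decay. Where you genuinely diverge is in the proof of the key dissipation estimate for the entropy term. The paper does not differentiate $T^\vartheta$: it bounds $\int\partial_y\rho^\vartheta_t\,(T^\vartheta_t-y)\,dy\leq\mathcal E(\nu^\vartheta)-\mathcal E(\rho^\vartheta_t)$ via displacement convexity of the entropy along one-dimensional Wasserstein geodesics (Lemma \ref{Lemma_ENTROPIA}), and then invokes the fact that the Gaussian minimizes $\mathcal E+\tfrac12 M_2$ among one-dimensional densities. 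You instead compute $\partial_t\mathcal E$ exactly through $(\varphi^\vartheta)''=1-(T^\vartheta)'$, pass to the inverse map $S=z+g$, and obtain the direction-wise inequality $A^\vartheta+\tfrac12 M_2(\rho^\vartheta_t)\geq\tfrac32$ by the algebraic identity $(1+g')^{-1}=1-g'+(g')^2/(1+g')$ and Gaussian integration by parts; note that after an integration by parts this inequality is precisely equivalent to the paper's combination of Lemma \ref{Lemma_ENTROPIA} with the Gaussian's minimality, so the two routes exploit the same structure. Your version is more elementary and self-contained (no appeal to McCann's convexity), makes explicit where isotropy of the Gaussian enters (the relation $(\nu^\vartheta)'=-z\nu^\vartheta$ uniformly in $\vartheta$), and even produces an exact dissipation with nonnegative remainders, which could be useful for quantitative refinements; the paper's version is softer and avoids differentiating $T^\vartheta$ (in your computation one should note that the distributional derivative of the monotone map may have a singular part, but since it dominates the a.e.\ derivative the inequality only improves). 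Two minor cautions: your claim that the discrete inequality $\Psi(\rho^\tau_{n+1})-\Psi(\rho^\tau_n)\leq-\tfrac\tau2 SW_2^2(\rho^\tau_{n+1},\nu)$ follows ``from the optimality conditions'' of the JKO step is optimistic as stated --- the standard tool is a flow-interchange argument dissipating $\mathscr F$ along the Fokker--Planck semigroup, which requires real work --- but since the paper's own proof is formal on exactly these points this is not a gap relative to it; and your honest constant is $2H(\rho_0\mid\nu)/t$, since $\Psi_{\min}=\Psi(\nu)=-\tfrac d2\log(2\pi)\neq0$ (a constant the paper itself records slightly inaccurately).
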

To prove this theorem we need the following preliminary result.
\begin{lemma}
\label{Lemma_ENTROPIA}
Given two absolutely continuous probability measures $\mu$, $\nu \in \mathscr{P}_2(\mathbb{R})$, consider the optimal transport map $T$ from $\mu$ to $\nu$.
Then we have
\[\int_{\mathbb{R}} \partial_y \mu (y) \left(T(y)-y\right) dy\leq \mathcal E(\nu)-\mathcal E(\mu). \]
\end{lemma}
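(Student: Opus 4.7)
The plan is to reduce the inequality to the elementary bound $s-1-\log s \geq 0$ for $s>0$, via an integration by parts on the left-hand side and the one-dimensional Monge--Ampère identity $\mu(y)=\nu(T(y))\,T'(y)$, which holds a.e.\ because $T$ is the monotone rearrangement between two absolutely continuous probability measures on $\mathbb{R}$.

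First I would integrate by parts (assuming enough decay of $\mu$ at infinity so that the boundary terms vanish, which is the regime relevant for the applications in Theorem \ref{Gaussian theorem}), obtaining
\[\int_{\mathbb{R}} \partial_y \mu\,(T-y)\, dy \;=\; -\int_{\mathbb{R}} \mu\,(T'-1)\, dy \;=\; 1-\int_{\mathbb{R}} \mu\,T'\, dy,\]
since $\int \mu = 1$. Next, the change of variables $z=T(y)$ together with Monge--Ampère gives
\[\mathcal{E}(\nu) \;=\; \int \nu \log \nu \;=\; \int \mu \log \nu(T) \;=\; \int \mu \log\frac{\mu}{T'} \;=\; \mathcal{E}(\mu) - \int \mu \log T'\, dy.\]
Subtracting the first display from $\mathcal{E}(\nu)-\mathcal{E}(\mu)$, the claimed inequality becomes
\[\int_{\mathbb{R}} \mu\,\bigl(T' - 1 - \log T'\bigr)\, dy \;\geq\; 0,\]
which follows from the pointwise inequality $s-1-\log s \geq 0$ applied with $s=T'(y)$. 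On the set $\{T'=0\}$ the integrand poses no issue, since there $\mu$ vanishes as well by Monge--Ampère, and this set does not contribute.

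The main obstacle is not algebraic but a matter of regularity: one needs to make sense of $\partial_y \mu$ (for instance in $W^{1,1}_{\mathrm{loc}}$ or distributionally, after a mollification argument), ensure that the boundary terms at $\pm\infty$ in the integration by parts actually vanish, and control the a.e.\ ambiguity of $T'$ on plateaus of $T$. All these technical points are available as soon as $\mu$ is absolutely continuous with finite entropy and finite second moment, i.e.\ exactly in the setting where the lemma will be invoked in the proof of Theorem \ref{Gaussian theorem}.
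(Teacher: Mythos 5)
Your argument is correct, but it is genuinely different from the one in the paper. The paper interprets the left-hand side as the time-derivative at $t=0$ of the entropy along the displacement interpolation $\omega(t)=((1-t)\mathrm{id}+tT)_\#\mu$, and then invokes McCann's geodesic convexity of $\mathcal{E}$ together with $f'(0)\leq f(1)-f(0)$; the inequality is thus a direct quotation of displacement convexity. You instead prove the inequality by hand: integration by parts turns the left-hand side into $1-\int \mu T'$, the one-dimensional Monge--Amp\`ere identity $\mu=\nu(T)\,T'$ turns $\mathcal{E}(\nu)-\mathcal{E}(\mu)$ into $-\int\mu\log T'$, and the conclusion reduces to the pointwise bound $s-1-\log s\geq 0$ — in effect you re-derive exactly the piece of one-dimensional displacement convexity that the paper cites as a black box. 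Your route is more elementary and makes the pointwise mechanism of the inequality transparent, at the price of the regularity bookkeeping you acknowledge: besides the boundary terms, note that $T$ is only monotone, so its distributional derivative may carry a nonnegative singular part; the integration by parts then gives $1-\int\mu\,dT'\leq 1-\int\mu\,T'_{ac}\,dy$, and since it is the absolutely continuous part $T'_{ac}$ that appears in the Monge--Amp\`ere identity, the discrepancy only reinforces the desired inequality. One should also restrict to the case where both entropies (whose negative parts are controlled by the second moments) are finite, the remaining cases being trivial or degenerate; the paper's geodesic argument quietly glosses over analogous technical points, so neither proof is more demanding than the other in substance.
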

\begin{proof}
    Consider the curve of measures $\omega(t)=((1-t)id+tT)_\#\mu$. Such a curve is a constant-speed geodesic (with respect to the $ W_{2}$ distance) connecting $\mu=\omega(0)$ to $\nu=\omega(1)$. 
    
    Since $\omega$ is a geodesic, we know it must solve the the continuity equation 
\[\partial_{t}\omega +\partial_{y}(\textbf{v}_{t}\omega)=0,\] 
where the velocity field is $\textbf{v}(t,y)= T(S_{t}^{-1}(y))-S_{t}^{-1}(y)$, with $S_{t}(y)\coloneqq (1-t)y+tT(y)$. We thus have
\begin{equation*}
    \begin{split}
    &\partial_{t}\int_{\mathbb{R}}\omega_t \log{\omega_t} dr = \int_{\mathbb{R}} \partial_{t}\omega_t\log{\omega_t}dr\\ &=- \int_{\mathbb{R}}\partial_{r}(\textbf{v}_{t}\omega_t)\log{\omega_t} dr=\int_{\mathbb{R}}\textbf{v}_{t}\partial_{y}\omega_t\\
    &=\int_{\mathbb{R}} \left[T\left(S_{t}^{-1}(y)\right)-S_{t}^{-1}(y)\right]\partial_{y}\omega_t dy,\\
    \end{split}
\end{equation*}
and valuating the expression at $t=0$, we obtain:
\begin{equation*}
    \partial_t\bigg|_{t=0}\mathcal{E}(\omega(t))=\int_{\mathbb{R}}\partial_y \mu(y)\left(T(y)-y\right)dy.
\end{equation*}
We now use the convexity of the function $t \mapsto \mathcal{E}(\omega(t))$, where $\omega$ is a Wasserstein geodesic in $\mathscr{P}(\mathbb{R}^d)$. Indeed, the entropy is convex along these geodesics (see \cite{McC} and \cite[Section 7.3.2]{santambrogio2015optimal}), and for any convex function $f$ one has $f'(0) \leq f(1)-f(0)$. This proves the claim.
\end{proof}
We are now ready to prove theorem \ref{Gaussian theorem}
\begin{proof}
Since $(\rho_t)_{t \geq 0}$ solves the continuity equation \eqref{CE}, we have
\begin{equation}
\begin{split}
\label{uno}
        &\partial_t \int \rho_t\log{\rho_t} dx= \int [\partial_t \rho_t \log{\rho_t} + \partial_t \rho_t ]dx=-\int [\nabla\cdot (v_t\rho_t) \log{\rho_t} - \nabla(v_t\rho_t)] dx\\
        &=-\int \nabla\cdot(\rho_t v_t)\log{\rho_t} dx= \int \rho_t v_t \cdot\frac{1}{\rho_t}\nabla\rho_t dx= \int v_t \cdot \nabla\rho_t dx\\
        &=\fint \left[\int \nabla \rho_t \cdot\vartheta \left(T^\vartheta_{t}(x \cdot \vartheta)-x \cdot \vartheta\right) dx\right] d\mathscr{H}^{d-1}(\vartheta)\\
        &=\fint \int_{\mathbb{R}} \partial_y \rho^\vartheta_{t}(y)\left(T^{\vartheta}_{t}(y)-y\right)dy d\mathscr{H}^{d-1}(\vartheta) \leq \fint \left[ \mathcal{E}\left(\nu^{\vartheta}\right)-\mathcal{E}\left(\rho^{\vartheta}\right)\right]d\mathscr{H}^{d-1}(\vartheta).
\end{split}
\end{equation} 
The last inequality comes from the inequality in Lemma \ref{Lemma_ENTROPIA}. We then obtain 
\begin{equation}
\label{Entropy_ineq}
    \partial_t \mathcal{E}(\rho_t)\leq \fint \mathcal{E}(\nu^\vartheta)-\fint \mathcal{E}(\rho^\vartheta).
\end{equation}

Now notice that the Gaussian measure minimizes the functional
\[\rho \mapsto \mathcal{E}(\rho)+\int \frac{|x|^2}{2}\rho.\]
\noindent Indeed $f(s)=s\log{s}$ is a convex function, and thus $f(t)\geq f(s)+f'(s)(t-s)=f(s)+(\log{s}+1)(t-s)$. Thus, for every absolutely continuous $\rho$, and if $\nu$ is the Gaussian distribution, 
\begin{dmath*}
    \int \rho \log{\rho} \geq \int \nu\log{\nu}+\int \log{\nu}(\rho-\nu)+\int (\rho-\nu)=\int \nu \log{\nu}-\int \frac{|x|^2}{2}\rho+\int \frac{|x|^2}{2}\nu,
\end{dmath*}
and therefore 
\[\int \left[\rho \log{\rho}+\frac{|x|^2}{2}\rho\right]\geq \int \left[ \nu \log{\nu}+\frac{|x|^2}{2}\nu \right] =0\]
Summing Equations \eqref{due} and \eqref{Entropy_ineq}, and using the fact that, for any $\vartheta \in \mathbb{S}^{d-1}$, $\nu^{\vartheta}$ is still a (one-dimensional) standard Gaussian, we obtain
\begin{equation*}
\begin{split}
    &\partial_t \left(\mathcal{E}(\rho_{t})+\int \frac{|x|^2}{2}\rho_{t} \right)\\
    &\leq \fint \left(\mathcal{E}\left(\nu^{\vartheta}\right)-\mathcal{E}\left(\rho_{t}^{\vartheta}\right)\right) + \fint \int \frac{|y|^2}{2}\nu^{\vartheta} - \fint\int \frac{|y|^2}{2}\rho_{t}^\vartheta-\frac{1}{2} SW_{2}^{2}(\rho_{t},\nu)\\
    &=\fint \left[ \left( \mathcal{E}\left(\nu^{\vartheta}\right)+\int\frac{|y|^2}{2}\nu^{\vartheta}\right)- \left( \mathcal{E}\left(\rho_{t}^{\vartheta}\right)+\int\frac{|y|^2}{2}\rho_{t}^\vartheta\right)\right]  - \frac{1}{2} SW^{2}_{2}(\rho_{t},\nu)\\
    &\leq -\frac{1}{2} SW^2_2(\rho_{t},\nu).
\end{split}
\end{equation*}
\noindent Hence, integrating with respect to time, and recalling that $\frac{1}{2}SW_2^2$ is a decrasing quantity along its gradient flow,
\begin{equation*}
    \begin{split}
        \frac{T}{2}SW_2^2(\rho_T,\nu)\leq \frac{1}{2}\int_{0}^{T}  SW^2_2(\rho_{t},\nu)dt &\leq -\left[\mathcal{E}(\rho_{T})-\mathcal{E}(\rho_{0})\right]-\int \left[\frac{|x|^2}{2}\rho_{T} - \frac{|x|^2}{2}\rho_{0} \right]dt\\  
        &\leq \mathcal{E}(\rho_{0})+\int \frac{|x|^2}{2}\rho_{0} \eqqcolon C
    \end{split}
\end{equation*}
for every $T \geq 0$. Thus
\[ SW^{2}_{2}(\rho_{t},\nu) \leq 2\frac{\mathcal{E}(\rho_0)+\int \frac{|x|^2}{2}\rho_{0}}{t}.\]

\end{proof}

The above proof is inspired by the proof of convergence of the IDT procedure in \cite{pitie2007automated}, presented as well in \cite[Theorem 5.2.2]{bonnotte2013unidimensional}.

\begin{Corollary}
Take $q\geq 2$. Then, if $\rho_0\in \mathscr P_q(\mathbb R^d)$ has finite entropy, we have
\begin{equation*}
    \rho_{t} \xrightarrow[t \to \infty]{W_{p}} \nu,
\end{equation*}
for all values of $p<q$.
\end{Corollary}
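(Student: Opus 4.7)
The plan is to combine three ingredients: the decay $SW_2^2(\rho_t,\nu)\leq C/t$ from Theorem~\ref{Gaussian theorem}, a uniform bound on the $q$-th moment of $\rho_t$ in the spirit of \eqref{Estimate_2_Moments}, and the compactness result of Theorem~\ref{Compactness_Wasserstein}.

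First I would observe that the differential inequality leading to \eqref{Estimate_2_Moments}, applied with exponent $q$ (and complemented by \eqref{due} in the borderline case $q=2$, which by Gr\"onwall gives $\mathcal{M}_2(\rho_t)\leq\max\{\mathcal{M}_2(\rho_0),\mathcal{M}_2(\nu)\}$), yields $\sup_{t\geq 0}\mathcal{M}_q(\rho_t)<\infty$, since the standard Gaussian $\nu$ has finite moments of every order. Theorem~\ref{Compactness_Wasserstein} applied with exponents $(p,q)$ then shows that the whole trajectory $\{\rho_t\}_{t\geq 0}$ lies in a sequentially compact subset of $(\mathscr{P}_p(\mathbb{R}^d),W_p)$.

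Next I would identify the unique possible accumulation point as $t\to\infty$. Let $t_n\to\infty$ and, along a subsequence, assume $\rho_{t_n}\xrightarrow{W_p}\rho_\infty$. This implies the weak convergence $\rho_{t_n}\rightharpoonup\rho_\infty$, hence, for every $\vartheta\in\mathbb{S}^{d-1}$, the weak convergence of the one-dimensional projections $(\pi^\vartheta)_{\#}\rho_{t_n}\rightharpoonup (\pi^\vartheta)_{\#}\rho_\infty$. By the weak lower semicontinuity of $W_2$ on $\mathscr{P}_2(\mathbb{R})$ together with Fatou's lemma on $\mathbb{S}^{d-1}$, I would get $SW_2^2(\rho_\infty,\nu)\leq \liminf_{n} SW_2^2(\rho_{t_n},\nu)=0$ by Theorem~\ref{Gaussian theorem}, so $\rho_\infty=\nu$ since $SW_2$ is a metric.

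A standard subsequence argument (every subsequence admits a further subsequence converging in $W_p$ to the same limit $\nu$) then upgrades this to convergence of the full curve: $\rho_t\xrightarrow{W_p}\nu$ as $t\to\infty$. There is no real obstacle here, but the slightly subtle point, in the range $p<2$ (which is possible when $q=2$), is that one cannot simply dominate $SW_2$ by $W_p$ to translate $W_p$-convergence into $SW_2$-convergence directly; this is why going through weak convergence and invoking lower semicontinuity of $SW_2$ is the cleaner route.
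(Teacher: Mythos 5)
Your proposal is correct and follows essentially the same route as the paper: the uniform moment bound from \eqref{Estimate_2_Moments} (with the $q=2$ case handled via \eqref{due}) plus Theorem \ref{Compactness_Wasserstein} give compactness of the trajectory in $W_p$, and the $SW_2$-decay of Theorem \ref{Gaussian theorem} then identifies $\nu$ as the only possible limit point. The paper states this upgrade in one line, while you spell out the identification step (lower semicontinuity of $SW_2$ under weak convergence and the subsequence argument), which is a faithful elaboration rather than a different proof.
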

\begin{proof}
The proof is easy once we observe that \eqref{Estimate_2_Moments} and Theorem \ref{Compactness_Wasserstein} provide compactness of the flow $(\rho_{t})_{t \geq 0}$ in the spaces $(\mathscr{P}_p(\mathbb{R}^d), W_{2})$ provided $\rho_0$ has finite moments. This turns the convergence in the distance $SW_2$ into a more standard $W_p$ convergence.
for any $q<p$.
\end{proof}

\subsection{The flow may not converge to the target}
We can provide examples in which the flow does not converge to the target.
\begin{example}
Consider the following construction in $\mathbb{R}^2$: let the starting measure be
\[\rho_0=\frac{\delta_{(-1,0)}}{2}+\frac{\delta_{(1,0)}}{2}\]
and the target measure
\[\nu=\frac{\delta_{(0,a)}}{2}+\frac{\delta_{(0,-a)}}{2},\]
for a certain $a>0$ to be defined. We can show that there exists $a>0$ such that the configuration $\rho_0$ is stationary, \textit{i.e.} the velocity field associated with the SWF at time zero is $v_0=0$ and therefore the gradient flow is $\rho_t =\rho_0$ for any $t \geq 0$, so that the flow does not converge to the target. 
Calling $T^{\vartheta}$ the optimal transport map from $\rho_\vartheta$ and $\nu_\vartheta$, we have
\begin{equation*}
\begin{split}
    v_0 &= \fint \left(T^{\vartheta}(x \cdot \vartheta)-x\cdot \vartheta\right)\vartheta\\
    &=2\fint_{0}^{\frac{\pi}{2}}\left(a\sin\vartheta - \cos\vartheta \right)\begin{pmatrix}
    \cos\vartheta\\
    \sin\vartheta
    \end{pmatrix}-2 \fint_{-\frac{\pi}{2}}^{0}\left(\cos\vartheta + a\sin\vartheta\right) \begin{pmatrix}
    \cos\vartheta\\
    \sin\vartheta
    \end{pmatrix}\\
    &=2\fint_{0}^{\frac{\pi}{2}}\begin{pmatrix}
    a\sin\vartheta\cos\vartheta - \cos^2\vartheta\\ a\sin^2\vartheta - \cos\vartheta\sin\vartheta\end{pmatrix}-2\fint_{\frac{-\pi}{2}}^{0} \begin{pmatrix}
        \cos^2\vartheta + a\cos\vartheta\sin\vartheta\\
        \cos\vartheta \sin\vartheta + a\sin^2\vartheta
    \end{pmatrix}\\
    &=\begin{pmatrix}
    a\fint_{0}^{2\pi}\frac{\sin(2\vartheta)}{2}\text{sgn}(\vartheta)-\fint_{0}^{2\pi}\cos^2\vartheta\\
    0
    \end{pmatrix}.
\end{split}
\end{equation*}
All we have to do now is imposing the first row of the above matrix equal to zero, obtaining:
\[a=\frac{\pi}{2}.\]
\end{example}
Note that the above example is not new at all and similar computations on the case where both the target and the starting measure are composed of two symmetric atoms are also illustrated in \cite[Section 1.6]{TanFlaDel} or \cite[Figure 2]{BonRabPeyPfi}. 
\begin{example}
    Consider the following construction in $\mathbb{R}^2$: let the starting measure be 
    \[\rho_0=\frac{\mathscr{H}^{1}\left((a,b)\times\{0\}\right)}{|b-a|}\]
    and the target measure $\nu$ be any radial distribution. Then, by the symmetry in the construction, we can see that we always have $\mathrm{supp}(\rho_t)\subset \mathbb R\times\{0\}$. Thus, in general, this flow will not converge to the target measure.
\end{example}
Notice that in both these examples we cannot assume $\rho_0$ to be absolutely continuous with respect to the Lebesgue measure. This is indeed coherent with the strategy used to prove the convergence of the Gaussian-target case, which relied on entropy methods.

\section{The flow map of the SWF does not provide optimal transport}
\label{S:4}

This section contains a counterexample to Conjecture 2 presented in the introduction. We show the following:
\begin{center}
\textit{It is not true that, for any initial distribution $\rho_0$ and any target distribution $\nu$, the sliced-Wasserstein flow converges to the target measure $ \nu$ itself and the limit of the flow map arising from the Lagrangian description of the model exists and is the optimal transport map from  $\rho_0$ to $\nu$.}
\end{center}
Using the idea developed in \cite{lavenant2022flow}, we can write a necessary condition which must hold if we want Conjecture 2 to be true: take $\rho_0$ sufficiently smooth and quickly decaying at infinity, and assume that Conjecture 2 holds not only for $\rho_0$ but for all $(\rho_t)_{t\geq 0}$. This means that the flow $Y_t$ is well defined for any $t\geq 0$, that the limit $\lim_{t \to \infty}Y_t\eqqcolon T$ exists, and that it is the optimal transport map between $\rho_0$ and $\nu$. Under these assumptions, we have that for any $t \geq 0$ the SWF provides optimal transport from $\rho_t$ to $\nu$. That is, the map $T\circ Y_{t}^{-1}$ is an optimal transport map between $\rho_t$ and $\nu$. Thus, $Y_t \circ T^{-1}$ is also an optimal transport map between $\nu$ and $\rho_t$. Let us denote by $S$ the map $T^{-1}$. Making use of theorem \eqref{Brenier's_thm}, we have that the following condition must hold
\begin{equation} 
\label{Necessary_Condition_1}
\forall t \geq 0, \forall x \in \mathbb{R}^d, \hspace{1mm} \text{ the Jacobian of } Y_t \circ S (x) \text{ is a symmetric matrix }.
\end{equation}
This Jacobian matrix reads $DY_t(S)DS$. Moreover, differentiating \eqref{ODE.} with respect to $x$, we see that
\[\frac{\partial DY_t}{\partial t}=-\left(\fint_{\mathbb{S}^{d-1}} \varphi''_{t,\vartheta}(x \cdot \vartheta)\vartheta \otimes \vartheta \hspace{3mm}d\vartheta\right),\]
together with $DY_{0}=\text{Id}$. Thus, differentiating the Jacobian of $Y_t \circ S$ with respect to time, we have that condition \eqref{Necessary_Condition_1} implies  
\[\forall t \geq 0, \forall x \in \mathbb{R}^{d}, \hspace{5mm} \left[\fint_{\mathbb{S}^{d-1}} \varphi''_{t,\vartheta}\left(S(x)\cdot \vartheta\right)\vartheta \otimes \vartheta\right]DS(x) \hspace{3mm} \text{ is a symmetric matrix.}\]
Evaluating this expression at $t=0$ and remembering that $DS(x)$ is symmetric for any $x$ (since our assumption is that $S$ is an optimal transport map between $\nu$ and $\rho_0$) we conclude that the matrices
\[\fint_{\mathbb{S}^{d-1}}\varphi''_{\vartheta}\left(S(x) \cdot \vartheta\right)\vartheta \otimes \vartheta \hspace{3mm}\text{ and } \hspace{3mm} DS(x) \hspace{3mm}\]should commute for any $x$.
Composing both the matrices with $S^{-1}=T$ on the right hand side and using the identity $DS(S^{-1})=[DT]^{-1}$, and the fact that a symmetric matrix $A$ commutes with an invertible matrix $B$ if and only if it commutes with $B^{-1}$, we conclude that:
\[\forall x \in \mathbb{R}^{d}, \hspace{5mm} \fint_{\mathbb{S}^{d-1}}\varphi''_{\vartheta}\left(x \cdot \vartheta\right)\vartheta \otimes \vartheta \hspace{3mm} \text{and  } \hspace{3mm} DT(x) \hspace{3mm}\text{commute}.\]
\\
Using again Theorem \ref{Brenier's_thm}, namely the fact that $T=Du$ for a convex map $u\colon \mathbb{R}^{d} \to \mathbb{R}$, we can write
\[\forall x \in \mathbb{R}^{d}, \hspace{5mm} \fint_{\mathbb{S}^{d-1}}\varphi''_{\vartheta}\left(x \cdot \vartheta\right)\vartheta \otimes \vartheta \hspace{3mm} \text{and  } \hspace{3mm} D^2u(x) \hspace{3mm}\text{ commute.}\]

For the sake of simplicity we will call $\rho_0=\rho$ in the following. 
Denote also by $u_\vartheta \colon \mathbb{R} \to \mathbb{R}$ the map whose derivative is the transport map $T^\vartheta$ between $\rho^\vartheta$ and $\nu^\vartheta$. Writing the Monge-Ampére equation for $\rho^{\vartheta}$ and $\nu^{\vartheta}$ we have
\begin{equation*}
    u''_{\vartheta}(z)=\frac{\rho^{\vartheta}(z)}{\nu^{\vartheta}\left(T^{\vartheta}(z)\right)} \hspace{3mm} \forall z \in \mathbb{R}.
\end{equation*}
Since  we have the equality $\varphi''_{\vartheta}(z)=1-u''_{\vartheta}(z)$, we get
\begin{equation*}
M_{i,j}(x)\coloneqq \fint \left( 1 -\frac{\rho^{\vartheta}(x \cdot \vartheta)}{\nu^{\vartheta}\left(T^{\vartheta}(x \cdot \vartheta)\right)}\right) \vartheta_{i}\vartheta_{j}= \frac{\delta_{ij}}{d} - \fint \frac{\rho^{\vartheta}(x \cdot \vartheta)}{\nu^{\vartheta}(T^{\vartheta}(x \cdot \vartheta))}\vartheta_{i}\vartheta_{j}.
\end{equation*}
If we choose $\rho$ and $\nu$ to be even measures ($\rho(x)=\rho(-x)$, $\nu(x)=\nu(-x)$ for all $x \in \mathbb{R}^d$), we have $T^{\vartheta}(0)=0$ for all $\vartheta \in \mathbb{S}^{d-1}$. Moreover, we also choose $\nu$ to be radial (which is stronger than even), so that the value $\nu^\vartheta(0)$ does not depend on $\vartheta$. Our goal can be rewritten as follows: we want to show that, in general, the following two matrices do not commute:
\begin{equation}
\label{Condition_B}
    \frac{\text{Id}}{d}-M(0)=\fint \frac{\rho^{\vartheta}(0)}{\nu^{\vartheta}(0)} (\vartheta \otimes \vartheta) d\vartheta =c\fint \rho^{\vartheta}(0)(\vartheta \otimes \vartheta) d\vartheta \hspace{3mm} \text{ and } \hspace{3mm} D^2u(0).
\end{equation}

Since we are free to choose any even initial measure $\rho$, let us impose $\rho =(\nabla u)^{-1}_{\#}\nu$, for $u(x)=|x|^{2}/2+\varepsilon\varphi(x)$, where $\varphi$ is a smooth, compactly supported and even function. It is well known that, for $\varepsilon$ small enough, $u$ is convex and $\nabla u$ is a $C^{\infty}$ diffeomorphism that coincides with the identity outside of a compact set which moreover, by Brenier's theorem, is the optimal transport map from $\rho$ to $\nu$.
Finally the Monge-Ampére equation guarantees that, if $\varphi$ is even, so is $\rho$.  
We seek now for the expression of $\rho^{\vartheta}(0)$ in terms of $\varphi$. Again by the Monge-Ampére equation for the transport problem from $\rho$ to $\nu$, we deduce that, for any $x \in \mathbb{R}^d$, we have
\[\rho(x)=\det D^2u(x)\nu(\nabla u(x)).\]
In our case $\nabla u (x) = x + \varepsilon\nabla\varphi(x)$ and $\det D^{2}u(x)= 1+\varepsilon\Delta\varphi(x)+ \mathcal{O}(\varepsilon^{2})$.
We deduce that for any $x \in \mathbb{R}^d$
\begin{equation*}
\begin{split}
\rho(x)=&\left(1+\varepsilon\Delta\varphi+\mathcal{O}(\varepsilon^2)\right)\left(\nu (x) + \varepsilon D\nu(x) \cdot \nabla \varphi(x)\right) +\mathcal{O}(\varepsilon^2)\\
=&\nu(x) + \varepsilon\left(D\nu(x) \cdot \nabla \varphi(x)+\Delta\varphi(x)\nu (x)\right)+\mathcal{O}(\varepsilon^2).
\end{split}
\end{equation*}
By the definition of $\rho^\vartheta$, we get
\begin{equation*}
\label{rhotheta}
\rho^{\vartheta}(0)=\int_{\vartheta^{\perp}}\rho(y)dy=\int_{\vartheta^{\perp}}\left(\nu(y) + \varepsilon\left(D\nu(y) \cdot \nabla \varphi(y)+\Delta\varphi(y)\nu (y)\right)\right)dy+\mathcal{O}(\varepsilon^{2}).
\end{equation*}

We now impose $\varphi(x)=\varepsilon \psi(x) + \eta(x)$, with $\psi$, $\eta$ even functions (so that $\varphi$ is even) and such that $\text{supp}(\psi) \subseteq B(0,1)$ and $\text{supp}(\eta) \subseteq B( R e_d, 1) \cup B( -R e_d, 1) $ for a fixed $R>>1$, $e_d$ being the last vector of the canonical basis in $\mathbb{R}^d$. In this way $D^2 u (0) = \mathrm{Id}+\varepsilon^2 D^2\psi (0)$ and so, by \eqref{Condition_B}, our aim becomes to prove that the matrix $\fint \rho^{\vartheta}(0) \vartheta \otimes \vartheta$ does not commute with $D^2\psi(0)$. 
We have 
\begin{equation*}
    \begin{split}
        \rho^{\vartheta}(0) = & \int_{\vartheta^{\perp}} \nu(y)  dy +\varepsilon\int_{\vartheta^{\perp}} \left(D\nu(y) \cdot \nabla \eta(y)+\Delta\eta(y)\nu (y)\right)dy + \mathcal{O}(\varepsilon^2)\\
        =& \int_{\vartheta^{\perp}} \nu(y)  dy  +\varepsilon\int_{\vartheta^{\perp}}\nu(y)\eta_{\vartheta \vartheta}(y)dy+ \mathcal{O}(\varepsilon^2),
    \end{split}
\end{equation*}
after integration by parts.
We deduce that our goal becomes now to prove the following fact: \say{$D^2 \psi(0)$ does not commute with $A$}, where
\begin{equation*}
\begin{split}
\label{Tensor_Description_Of_The_Matrix}
A=&\fint\int_{\vartheta^{\perp}} \nu(y)\eta_{\vartheta \vartheta}(y)(\vartheta \otimes \vartheta)  dy d\vartheta\\
    =& \fint \int_{\vartheta^{\perp}} \nu(y)(\vartheta \otimes \vartheta) D^2\eta(y)(\vartheta \otimes \vartheta) dy d\vartheta\\
    =& 2\fint \int_{\vartheta^{\perp}} \nu(y)(\vartheta \otimes \vartheta) D^2\Tilde{\eta}(y)(\vartheta \otimes \vartheta) dy d\vartheta,
\end{split}
\end{equation*}
and $\Tilde{\eta}$ is the restriction of $\eta$ to $B(Re_d, 1)$. For convenience, with a slight abuse of notation, in the following we will call $\eta$ this very function. Notice in particular that $\eta(x)=0$ if $x_d \leq 0$.
Since $\psi$ is any even function, in order to get our claim we just have to ensure that the matrix $A$ is not a multiple of the identity matrix. To do so, we can for example check that the diagonal elements of $A$ are not equal.

In the remaining part of this counterexample, for simplicity, we restrict to the case $d=2$. In this case, we can just check that we have $A_{11}-A_{22} \neq 0$: take $\eta(y_1,y_2)=\eta^r(y_1,y_2)=a^r(y_1)b(y_2)$ for $a^r$ and $b$ positive functions, even on their support and of unitary integrals, supported respectively in $[-r,r]$, $[R,R+1/2]$ (so that, if $r$ is small, $\text{supp}(\eta^r) \subseteq B(R e_2,1)$). Observe that $a^r \rightharpoonup \delta_{0}$ when $r \to 0$.\\

Using the notation $\eta^r_{ij} = (D^2\eta^r)_{ij}$, we have 
\[A_{11}=2\fint \int_{\vartheta^{\perp}}\vartheta_1^2(\vartheta_{1}^2\eta^r_{11}+2\vartheta_{1}\vartheta_{2}\eta^r_{12}+\vartheta_2^2\eta^r_{22})\nu(y)dy d\vartheta,\]
\[A_{22}=2\fint \int_{\vartheta^{\perp}}\vartheta_2^2(\vartheta_{1}^2\eta^r_{11}+2\vartheta_{1}\vartheta_{2}\eta^r_{12}+\vartheta_2^2\eta^r_{22})\nu(y)dy d\vartheta.\]
Since in dimension $d=2$, for any $y \in \vartheta^{\perp}$ one has 
\begin{equation*}
\label{Dimension_2_Trick}(\vartheta_1,\vartheta_2)=\left(-\frac{y_2}{|y|},\frac{y_1}{|y|}\right),
\end{equation*}
we have
\begin{equation}
\begin{split}
\label{A_11_And_A_22}
A_{11}=&2\fint \int_{\vartheta^{\perp}}\frac{y_2^2}{|y|^{4}}(y_2^2\eta^r_{11}-2y_{1}y_{2}\eta^r_{12}+y_1^2\eta^r_{22})\nu(y)dy d\vartheta,\\
A_{22}=&2\fint \int_{\vartheta^{\perp}}\frac{y_1^2}{|y|^4}(y_2^2\eta^r_{11}-2y_{1}y_{2}\eta^r_{12}+y_1^2\eta^r_{22})\nu(y)dy d\vartheta.
\end{split}
\end{equation}
To conclude we need the following 
\begin{lemma}
\label{Integration_Lemma}
    Given $f\colon \mathbb{R}^d \to \mathbb{R}$ such that the following integrals are well defined, we have
    \[\fint_{\mathbb{S}^{d-1}}\int_{\vartheta^{\perp}}f(y)dyd\vartheta=\frac{(d-1)\omega_{d-1}}{d\omega_{d}}\int_{\mathbb{R}^{d}}\frac{f(x)}{|x|}dx\]
    where $\omega_k$ stands for the measure of the unit ball in dimension $k$.
\end{lemma}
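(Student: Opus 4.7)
The plan is to establish the identity by a double change of variables: first parametrize each hyperplane $\vartheta^\perp$ by polar coordinates inside it, then swap the order of the spherical integrations using Fubini. The formula is essentially a disintegration statement: the two sides should compute the same integral of $f$ against the measure that counts each point $x\in\mathbb R^d\setminus\{0\}$ with weight proportional to the $(d-2)$-volume of the great subsphere $\mathbb S^{d-1}\cap x^\perp$.

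Concretely, I would write $y\in\vartheta^\perp$ as $y=r\omega$ with $r>0$ and $\omega\in\mathbb S^{d-1}\cap\vartheta^\perp$, so that $dy=r^{d-2}\,dr\,d\mathscr H^{d-2}(\omega)$. Substituting this into the left-hand side gives
\[
d\omega_d\fint_{\mathbb S^{d-1}}\int_{\vartheta^\perp}f(y)\,dy\,d\vartheta=\int_0^\infty r^{d-2}\!\int_{\mathbb S^{d-1}}\!\int_{\mathbb S^{d-1}\cap\vartheta^\perp}\!f(r\omega)\,d\mathscr H^{d-2}(\omega)\,d\mathscr H^{d-1}(\vartheta)\,dr.
\]
The key step is then to swap the two inner integrations: the set $\{(\vartheta,\omega)\in\mathbb S^{d-1}\times\mathbb S^{d-1}\,:\,\vartheta\perp\omega\}$ is symmetric in its two variables, so Fubini lets me integrate $\vartheta$ over $\mathbb S^{d-1}\cap\omega^\perp$ first. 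Since $f(r\omega)$ no longer depends on $\vartheta$, this inner integral is just the $(d-2)$-volume of a unit $(d-2)$-sphere, namely $(d-1)\omega_{d-1}$.

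After pulling this constant out, what remains is $\int_0^\infty r^{d-2}\int_{\mathbb S^{d-1}}f(r\omega)\,d\mathscr H^{d-1}(\omega)\,dr$, which is exactly $\int_{\mathbb R^d}\frac{f(x)}{|x|}dx$ written in polar coordinates (the missing factor $r^{d-1}$ from the Jacobian is replaced by $r^{d-2}$, accounting for the $1/|x|$). Dividing through by $d\omega_d$ yields the claimed identity.

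There is no serious obstacle here: the only subtlety is the symmetry argument justifying the swap of the integration over $\vartheta\in\mathbb S^{d-1}$ and $\omega\in\mathbb S^{d-1}\cap\vartheta^\perp$, which is a standard consequence of Fubini on the compact manifold of orthogonal pairs (equivalently, on the Stiefel manifold $V_2(\mathbb R^d)$ with its unique $O(d)$-invariant measure, so both orders of integration compute the same thing). The integrability hypotheses assumed in the statement are exactly what is needed to apply Fubini and polar coordinates without further care.
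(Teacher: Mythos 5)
Your proof is correct, but it follows a genuinely different route from the paper's. You compute the left-hand side directly: polar coordinates inside each hyperplane $\vartheta^{\perp}$, then an exchange of the two spherical integrations justified by the uniqueness of the $O(d)$-invariant measure on the manifold of orthonormal pairs $\{(\vartheta,\omega):\vartheta\perp\omega\}$, the inner integral producing the constant $\mathscr H^{d-2}(\mathbb S^{d-2})=(d-1)\omega_{d-1}$, after which the remaining integral is exactly $\int_{\mathbb R^d} f(x)|x|^{-1}dx$ in polar coordinates; the constants check out. The paper instead argues softly: by the Riesz representation theorem the left-hand side is integration against some locally finite measure $\mu$, which is radial by symmetry, and a radial measure is identified by its values on centered balls; taking $f=\mathbf 1_{B(0,R)}$ gives $\mu(B(0,R))=\omega_{d-1}R^{d-1}$ for every $R$ (since $\vartheta^{\perp}\cap B(0,R)$ is a $(d-1)$-ball of radius $R$ for every $\vartheta$), which matches the measure with density $\frac{(d-1)\omega_{d-1}}{d\omega_d}|x|^{-1}$. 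The paper's argument buys simplicity — no coarea/Stiefel-type Fubini is needed, only the trivial evaluation on indicators of balls — at the price of invoking a representation theorem and a uniqueness-of-radial-measures step; your argument is more explicit, produces the density and the constant by direct computation, and the only point requiring care (which you correctly flag and justify) is the symmetry of the iterated integration over orthogonal pairs, where one should also note that the two iterated integrals have the same total mass $d\omega_d\,(d-1)\omega_{d-1}$ so that no normalization factor is lost.
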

\begin{proof}
First restricting to compactly supported functions $f$ and then removing this restriction, it is clear, by the Riesz representation theorem for the dual of $C^0$, that there exists a locally finite measure $\mu$ on $\mathbb R^d$ such that 
 $\fint_{\mathbb{S}^{d-1}}\int_{\vartheta^{\perp}}f(y)dyd\vartheta=\int f d\mu$. It is also clear by symmetry reason that $\mu$ is a radial measure. Hence, in order to identify $\mu$ it is enough to compute the mass that it gives to every ball $ B(0,R)$. Taking for $f$ the indicator function of such a ball we obtain
 $\mu(B(0,R))=\omega_{d-1}R^{d-1}$. This is the same as taking for $\mu$ the measure with density $\frac{(d-1)\omega_{d-1}}{d\omega_{d}}\frac{1}{|x|}$, which concludes the proof.
\end{proof}
Applying Lemma \ref{Integration_Lemma} to Equations \eqref{A_11_And_A_22}, we deduce
\begin{equation*}
    \fint_{\mathbb{S}^{1}} \int_{\vartheta^{\perp}} f(y) dy d\vartheta = \frac{1}{\pi}\int_{\mathbb{R}^{d}} \frac{f(x)}{|x|}dx.
\end{equation*}
Using this formula, the difference between the diagonal elements of the $2 \times 2$ - matrix $A$, up to multiplication by the constant $\pi$, reads 
\begin{equation*}
    \begin{split}
        A_{11}-A_{22}=& \int_{\mathbb{R}^2} \left(\frac{x_2^2-x_1^2}{|x|^5}(x_2^2\eta^r_{11}-2x_1x_2\eta^r_{12}+x_1^2\eta^r_{22})\right)\nu(x)dx_1dx_2.\\
    \end{split}
\end{equation*}
Since 
\[h_{i}(x)\coloneqq \frac{x_i^2}{|x|^5}\nu(x)\]
can be assumed smooth enough in $\mathbb{R}^2\setminus \{0\}$, when $r \to 0$ the integrals defining $A_{11}$ and $A_{22}$ stay bounded (remembering that $\eta^r$ is compactly supported). 
Setting 
\[f(x_1,x_2)\coloneqq h_{2}(x)-h_1(x)= \frac{x_2^2-x_1^2}{|x|^5}\nu(x_1,x_2),\]
we have 
\begin{equation*}
    \begin{split}
        A_{11}-A_{22}=& \int_{\mathbb{R}^2}f(x)(x_2^2 \partial_{11}a^rb-2x_1x_2 \partial_{1}a^r\partial_{2}b+x_1^2 a^r \partial_{22}b)dx_1 dx_2.
    \end{split}
\end{equation*}
We need to be sure that this quantity is not zero as soon as $r \to 0$. Integrating by parts with respect to the variable $x_1$, we obtain
 \begin{equation*}
     \begin{split}
         A_{11}-A_{22} = & \int \left( x_2^2 b \int \partial_{11}f a^{r}+2x_2 \partial_2 b \int (\partial_1 f x_1 + f) a^{r} + \partial_{22} b \int f x_1^2 a^{r} \right) dx_2.
     \end{split}
 \end{equation*}
 Sending $ r \to 0$, this gives
 \begin{equation*}
     \begin{split}
         A_{11}-A_{22}=\int x_2^2 b \partial_{11}f(0,x_2)+2x_2\partial_2 b f(0,x_2) dx_2.
     \end{split}
 \end{equation*}
 Integrating by parts with respect to $x_2$,
 \begin{equation*}
     \begin{split}
         A_{11}-A_{22} = & \int b\left(\partial_{11}f(0,x_2) x_2^2-2f(0,x_2)-2x_{2}\partial_{2}f(0,x_2)\right) dx_2. 
     \end{split}
 \end{equation*}
 Remembering that $x_2$ can be assumed strictly positive (due to the fact that $\eta^r$ is symmetric and supported away from zero), we have
 \begin{dmath*}
     \partial_1 f=\nu x_1 \frac{-2|x|^2-5x_2^2+5x_1^2}{|x|^7}+\frac{x_2^2-x_1^2}{|x|^5}\nu_1, 
 \end{dmath*}
 then
 \begin{dmath*}
     f_{11}(0,x_2)=-7\frac{\nu}{|x|^5}+\frac{\nu_{11}}{|x|^3}
 \end{dmath*}
 and, by similar computations,
 \begin{dmath*}
     f_2(0,x_2)=-3\frac{\nu}{|x|^4}+\frac{\nu_2}{|x|^3}.
 \end{dmath*}
Therefore
\begin{dmath*}
    A_{11}-A_{22}=\int b \left(-3\frac{\nu}{x_2^3}+\frac{\nu_{11}}{x_2}-2\frac{\nu_2}{x_2^2}\right)dx_2.
\end{dmath*}
Considering that $\nu$ has been chosen to be radial, we make use of the relations
\begin{dmath*}
    \label{Derivate_Radial}
    \nu_{i}=\nu'\frac{x_i}{|x|}
\end{dmath*}
(where $\nu'$ -- and later $\nu''$ -- stands for the derivative of $\nu$ in the radial direction), and 
\begin{dmath*}
    \label{Hessian_Radial}
    \nu_{ij}=\frac{\nu'}{|x|}\delta_{ij}+\left(\frac{\nu''}{|x|^2}-\frac{\nu'}{|x|^3}\right)x_ix_j,
\end{dmath*}
to get that $A_{11}-A_{22}=0$ for any choice of $b$ if and only if
\begin{dmath*}
    -3\frac{\nu}{|x|^3}-\frac{\nu'}{|x|^2}=0\\
\end{dmath*}
holds for any $x \in \{0\}\times \mathbb{R}^{+}$: indeed, since $b$ is free to be chosen, so is its support (which we defined to be of the form $[R,R+1/2]$). 
Still remembering that $x_2$ can be assumed strictly positive, we get that this is equivalent to
\begin{dmath}
\label{ODE_fin}
        3\nu + |x|\nu'=0.
\end{dmath}
To conclude the counterexample it is sufficient to choose a radial target measure $\nu$ not satisfying \eqref{ODE_fin}. By the way, no probability density can solve \eqref{ODE_fin} since a solution $\nu$ should satisfy $\nu \propto |x|^{-3}$, which implies that $\nu$ is not a probability distribution, as it is not integrable.
\bigskip




{\bf Acknowledgments} The authors acknowledge the support of the Lagrange Mathematics and Computation Research Center, where the project started during a Master internship of the first author. The support of the European union via the ERC AdG 101054420 EYAWKAJKOS is also acknowledged.

\printbibliography

\end{document}